\documentclass[10pt,reqno]{amsart}

\usepackage[dvipsnames]{xcolor}
\usepackage{amsmath,amssymb,amsthm,amsfonts,enumerate,tikz,bm}
\usepackage{mathrsfs}
\usetikzlibrary{matrix,arrows,positioning,calc}
\usepackage[all]{xy}
\usepackage[bookmarksnumbered,colorlinks]{hyperref}
\usepackage{url}
\numberwithin{equation}{section}
\usepackage{graphicx}
\usepackage[T1]{fontenc}
\usepackage{textcomp}
\usepackage{palatino,helvet}
\usepackage{footmisc}
\usepackage{rotating}
\usepackage{tikz-cd}
\usepackage{adjustbox}
\usepackage{scrextend}
\usepackage{soul}

\newtheorem{definition}{Definition}[section]
\newtheorem{remark}[definition]{Remark}

\newtheorem{theorem}[definition]{Theorem}
\newtheorem{proposition}[definition]{Proposition}
\newtheorem{lemma}[definition]{Lemma}
\newtheorem{corollary}[definition]{Corollary}

\theoremstyle{remark}

\usepackage{paralist}

\usepackage{scrextend}
\deffootnote{2em}{0em}{\thefootnotemark\quad}


\newcommand{\mcC}{\mathcal{C}}

\newcommand{\mbE}{\mathbb{E}} 
\newcommand{\mbF}{\mathbb{F}} 
\newcommand{\mfs}{\mathfrak{s}} 
\newcommand{\xdashrightarrow}[1]{\overset{#1}{\dashrightarrow}}

\newcommand{\Ab}{\mathrm{Ab}}
\newcommand{\op}{\mathrm{op}}



\newcommand{\Hom}{\mathrm{Hom}}


\makeatletter
\def\@seccntformat#1{%
  \protect\textup{\protect\@secnumfont
    \ifnum\pdfstrcmp{section}{#1}=0 \scshape\bfseries\fi
    \ifnum\pdfstrcmp{subsection}{#1}=0 \bfseries\fi
    \csname the#1\endcsname
    \protect\@secnumpunct
  }%
}
\makeatother


\begin{document}

\title{A characterization of closed subfunctors through $3\times 3$-lemma property in extriangulated categories}
\thanks{2020 MSC: 18G80 (18G10; 18G25)}
\thanks{Key Words: $3\times 3$-lemma property, extriangulated categories}

\author{Juan C. Cala}
\address[Juan C. Cala]{Facultad de Ciencias, Universidad Nacional Aut\'onoma de M\'exico. Circuito Exterior, Ciudad Universitaria. CP04510. Mexico City, MEXICO. 
}
\email{jccalab@gmail.com}

\author{Shaira R. Hern\'andez}
\address[Shaira R. Hern\'andez]{Instituto de Matem\'aticas. Universidad Nacional Aut\'onoma de M\'exico. Circuito Exterior, Ciudad Universitaria. CP04510. Mexico City, MEXICO}
\email{shaira.hernandez@im.unam.mx}

\begin{abstract} Given an extriangulated category $(\mcC,\mbE,\mfs)$, we introduce the $3 \times 3$-lemma property for subfunctors of $\mbE$ and prove that an additive subfunctor $\mbF$ of $\mbE$ is closed if, and only if, it satisfies this condition. This characterization extends a well known result by A. Buan (for abelian categories) to extriangulated categories. As an application of this result, we get a new equivalent condition to describe saturated proper classes $\xi$ in $\mcC$.
\end{abstract}

\maketitle
\pagestyle{myheadings}
\markboth{\rightline {\scriptsize J. C. Cala and S. R. Hern\'andez}}
         {\leftline{\scriptsize A characterization of closed subfunctors through $3\times 3$-lemma property in extriangulated categories}}

\section*{\textbf{Introduction}}
In \cite{butler_classes_1961}, M. Butler and G. Horrocks introduced the notion of subfunctor of an additive bifunctor Ext in the context of abelian categories. Their main contribution was to establish a framework for performing relative homological algebra through subfunctors. More precisely, in order to achieve this, they defined the concept of \textit{closed subfunctor}. This approach is linked to ideas previously explored by D. Buchsbaum \cite{buchsbaum_note_1959, buchsbaum_satellites_1960}, where he demonstrated how one can perform homological algebra relative to a class of morphisms satisfying certain axioms.

On the other hand, following the definition of \textit{exact category} introduced by D. Quillen in \cite{quillen_higher_1973}, Dräxler et al. \cite[Proposition 1.4]{draxler_exact_1999} established the connection between closed subfunctors and exact structures. Building on this, A. Buan \cite[Lemma 2.2.1]{buan_closed_2001} characterized closed subfunctors using \textit{the $3 \times 3$-lemma property}, employing advanced techniques from the theory of exact categories, such as the Freyd–-Mitchell embedding theorem \cite[Appendix A]{keller_chain_1990}. It is worth noting that D. Buchsbaum \cite{buchsbaum_satellites_1960} and J. Fisher \cite{fisher_tensor_1968}, among others, were already aware of the usefulness of this property, but they were not familiar with this characterization. Recently, J. Cala \cite[Theorem 3.18]{cala_elementary_2024} proves this result only using elementary methods from the theory of abelian categories.

As a common generalization of abelian, exact and triangulated categories,  H. Nakaoka and Y. Palu introduced in \cite{NP19} the so-called \textit{extriangulated categories}. Moreover, similar as the exact categories (in the sense of Quillen) axiomatize extension-closed subcategories from an abelian category, they provide an axiomatic framework for extension-closed subcategories of triangulated categories. Therefore, in recent years, several notions and constructions have been generalized to this new context. 

Relative theories for a higher dimensional analogue to extriangulated categories, namely the $n$-exangulated categories, have been studied in \cite[Section 3]{HLN21} by  M. Hershend, Y. Lui and H. Nakaoka. Their results contains the extriangulated context by setting $n=1$ (see \cite[Proposition 4.3]{HLN21}). Hence, for an extriangulated category $(\mcC,\mbE,\mfs)$,  \cite[Proposition 3.16]{HLN21} provides several equivalent conditions for an additive subfunctor $\mbF$ of $\mbE$ to be closed. In particular, it shows that closed subfunctors are exactly those $\mbF \subseteq \mbE$ for which $(\mbF,\mfs|_{\mbF})$  gives an external triangulation of $\mcC$, where $\mfs|_{\mbF}$ is the restriction of $\mfs$ to $\mbF$-extensions. Therefore, closed subfunctors provides a general method to construct new extriangulated structures from a given one. This has motivated the study of them in several works, see for example \cite{baillargeon2022lattices, enomoto2021classifying, iyama2024auslander,sakai2023relative,tattar2024structure}.

On the other hand, \textit{proper classes of $\mbE$-triangles} introduced by J. Hu, D. Zhang and P. Zhou in \cite{HZZ20} have been used to study different existing notions in exact and triangulated contexts but now for extriangulated categories, for example \cite{HZZ20, hu2021gorenstein,xu2025ideal}. This generalizes the idea of consider certain classes of triangles to develop relative homological algebra in triangulated categories as A. Beligiannis did in \cite{beligiannis2000relative}.
From A. Sakai's work in \cite{sakai2023relative}, we know that there is a bijective correspondence between closed subfunctors of $\mbE$ and proper classes of $\mbE$-triangles. This means that both approaches for studying relative theories in extriangulated categories are the same.

A natural question is whether closed subfunctors of $\mathbb{E}$ can be characterized via the 3×3-lemma property, in analogy with the approach taken by A. Buan in the abelian setting. To the best of our knowledge, no version of the Freyd–-Mitchell embedding theorem is currently available for extriangulated categories, and the uniqueness of universal properties does not hold in this context. These limitations prevent a straightforward adaptation of the methods employed by A. Buan \cite{buan_closed_2001} and J. Cala \cite{cala_elementary_2024}. The aim of this work is to answer that question positively by providing a proof that uses only elementary properties of closed subfunctors in extriangulated categories.

\subsection*{Organization of the paper} 
 In Section \ref{sec:preliminaries}, we recall some notions and properties of extriangulated categories from \cite{NP19}. We include without proofs some of the results that are used later on in the proof of our main result. On the other hand, given an extriangulated category $(\mcC,\mbE,\mfs)$, we recall from \cite{HLN21} in Definition \ref{dfn:subfunctor} what it means for a functor $\mbF$ to be a subfunctor of $\mbE$ and in Proposition \ref{HLN21:prop 3.16} we recall the relation between closed subfunctors of $\mbE$ and relative extriangulated structures of $(\mcC,\mbE,\mfs)$.

In Section \ref{sec:main}, for a given extriangulated category $(\mcC,\mbE,\mfs)$, we introduce in Definition \ref{dfn: 3x3 property} the \textit{$3\times 3$-lemma property} for subfunctors $\mbF$ of $\mbE$ and state our main result, namely, that closed subfunctors $\mbF$ of $\mbE$ are exactly those having the $3\times 3$-lemma property. Finally, we see how this idea fits into the theory of proper classes developed in \cite{HZZ20,sakai2023relative}.


\subsection*{Conventions}
Throughout this paper, $\mcC$ denotes an additive category. We write $A\in \mcC$ to say that $A$ is an object of $\mcC$. Given $A,B\in\mcC$, we denote by $\Hom_{\mcC}(A,B)$ the abelian group of morphism from $A$ to $B$ in $\mcC$. Finally, $\mathrm{Set}$ and $\mathrm{Ab}$ stand for the categories of sets and abelian groups, respectively.


\section{\textbf{Preliminaries}}\label{sec:preliminaries}

\noindent \textbf{Extriangulated categories and terminology.} Let us briefly recall some important concepts and establish the notation we will be using through this work. For more details about these concepts and properties related to extriangulated categories, we refer the reader to \cite[Sections 2 and 3]{NP19}. 

An \textit{extriangulated category} consists of a triplet $(\mcC,\mbE,\mfs)$, where $\mcC$ is an additive category, $\mbE:\mcC^{\op} \times \mcC \to \Ab$ is an additive bifunctor and $\mfs$ is an \textit{additive realization} of $\mbE$ satisfying certain axioms (see \cite[Definition 2.12]{NP19} for more details). 
An \textit{$\mbE$-extension} is a triplet $(A,\delta,C)$, where $A,C \in \mcC$ and $\delta \in \mbE(C,A)$. We simply say that $\delta$ is an $\mbE$-extension to refer that $\delta \in \mbE(C,A)$ for some $A,C \in \mcC$. For every $a \in \Hom_{\mcC}(A,A')$ and  $c \in \Hom_{\mcC}(C',C)$, we use the notation $a \cdot \delta$ and $\delta \cdot c$ to denote $\mbE(C,a)(\delta)$ and $\mbE(c,A)(\delta)$, respectively. Given $\delta \in \mbE(C,A)$ and $\delta' \in \mbE(C',A')$, a \textit{morphism of $\mbE$-extensions} from $\delta$ to $\delta'$ is a pair of morphisms $a \in \Hom_{\mcC}(A,A')$ and $c \in \Hom_{\mcC}(C',C)$ such that $a\cdot \delta=\delta'\cdot c$. In this case, we write  $(a,c):\delta \to \delta'$. 

The \textit{realization} $\mfs$ of $\mbE$ assigns to any $\delta \in \mbE(C,A)$ an equivalence class $\mfs(\delta)=[A \xrightarrow{x} B \xrightarrow{y} C]$ of a sequence of morphisms in $\mcC$ (see \cite[Definitions 2.7 and 2.9]{NP19}). In this situation, we say that $A \xrightarrow{x} B \xrightarrow{y} C$ \textit{realizes} $\delta$ and we will call it an $\mfs$-\textit{conflation}. Furthermore, we say that the pair $(A \xrightarrow{x} B \xrightarrow{y} C, \delta)$ is an \textit{$\mbE$-triangle} which we denote by $A \xrightarrow{x} B \xrightarrow{y} C \xdashrightarrow{\delta}$. For the zero element $0=0_{(C,A)} \in \mbE(C,A)$, we say that $A \xrightarrow{x} B \xrightarrow{y} C \xdashrightarrow{0}$ is a \textit{split} $\mbE$-triangle. Hence, since $\mfs$ is additive, we have the split $\mbE$-triangles $0\to C \xrightarrow{1_C}C \xdashrightarrow{0}$ and $A \xrightarrow{1_A}A \to 0 \xdashrightarrow{0}$ for any $A,C \in \mcC$. Let $\mfs(\delta)=[A \xrightarrow{x} B \xrightarrow{y} C]$ and $\mfs(\delta')=[A' \xrightarrow{x'} B' \xrightarrow{y'} C']$.  We say that a triplet $(a,b,c)$ of morphisms in $\mcC$ \textit{realizes} $(a,c):\delta \to \delta'$ if  $bx=x'a$ and $cy=y'b$ hold. The triplet $(a,b,c)$ is called a \textit{morphism of $\mbE$-triangles} and it is denoted by the following commutative diagram 
\[\begin{tikzcd}
	A & B & C & {{}} \\
	{A'} & {B'} & {C'} & {{}}.
	\arrow["x", from=1-1, to=1-2]
	\arrow["a"', from=1-1, to=2-1]
	\arrow["y", from=1-2, to=1-3]
	\arrow["b", from=1-2, to=2-2]
	\arrow["\delta", dashed, from=1-3, to=1-4]
	\arrow["c", from=1-3, to=2-3]
	\arrow["{x'}"', from=2-1, to=2-2]
	\arrow["{y'}"', from=2-2, to=2-3]
	\arrow["{\delta'}"', dashed, from=2-3, to=2-4]
\end{tikzcd}\]

The following outcome will be useful in the proof of our main result.

\begin{proposition}\cite[Proposition 3.15]{NP19}\label{NP19:prop 3.15} Let $(\mcC,\mbE,\mfs)$ be an extriangulated category. Let $A_1 \xrightarrow{x_1} B_1 \xrightarrow{y_1} C \xdashrightarrow{\delta_1}$ and $A_2 \xrightarrow{x_2} B_2 \xrightarrow{y_2} C \xdashrightarrow{\delta_2}$ be any pair of $\mbE$-triangles. Then, there is a commutative diagram of $\mbE$-triangles 
\begin{equation}  \label{diag:NP3.15}
\begin{tikzcd}[ampersand replacement=\&]
	\& {A_2} \& {A_2} \\
	{A_1} \& M \& {B_2} \& {{}} \\
	{A_1} \& {B_1} \& C \& {{}} \\
	\& {{}} \& {{}}
	\arrow[equals, from=1-2, to=1-3]
	\arrow["{m_2}"', from=1-2, to=2-2]
	\arrow["{x_2}", from=1-3, to=2-3]
	\arrow["{m_1}", from=2-1, to=2-2]
	\arrow[equals, from=2-1, to=3-1]
	\arrow["{e_1}", from=2-2, to=2-3]
	\arrow["{e_2}"', from=2-2, to=3-2]
	\arrow["{\delta_1\cdot y_2}", dashed, from=2-3, to=2-4]
	\arrow["{y_2}", from=2-3, to=3-3]
	\arrow["{x_1}", from=3-1, to=3-2]
	\arrow["{y_1}", from=3-2, to=3-3]
	\arrow["{\delta_2\cdot y_1}"', dashed, from=3-2, to=4-2]
	\arrow["{\delta_1}", dashed, from=3-3, to=3-4]
	\arrow["{\delta_2}", dashed, from=3-3, to=4-3]
\end{tikzcd}\end{equation}
which satisfies $m_1\cdot \delta_1+m_2\cdot \delta_2=0.$
\end{proposition}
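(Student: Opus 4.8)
The plan is to realize $M$ as a homotopy pullback of $y_1$ and $y_2$ and to extract the two surrounding $\mbE$-triangles one at a time. First I would pull back $\delta_2$ along $y_1$: choose an $\mbE$-triangle $A_2\xrightarrow{m_2}M\xrightarrow{e_2}B_1\xdashrightarrow{\delta_2\cdot y_1}$ realizing $\delta_2\cdot y_1\in\mbE(B_1,A_2)$. Since $1_{A_2}\cdot(\delta_2\cdot y_1)=\delta_2\cdot y_1$, the pair $(1_{A_2},y_1)$ is a morphism of $\mbE$-extensions from $\delta_2\cdot y_1$ to $\delta_2$, so axiom $(\mathrm{ET3})$ produces a morphism $e_1\colon M\to B_2$ making $(1_{A_2},e_1,y_1)$ a morphism of $\mbE$-triangles; in particular $e_1m_2=x_2$ and $y_2e_1=y_1e_2$. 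This already supplies the right column (the triangle $\delta_2$), the middle column (the triangle $\delta_2\cdot y_1$), the split top row $A_2=A_2$, and the central commuting square.

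Next I would construct the middle row $A_1\xrightarrow{m_1}M\xrightarrow{e_1}B_2\xdashrightarrow{\delta_1\cdot y_2}$. The point is that the central square is a homotopy pullback, so the comparison map $e_1$ is itself a deflation whose kernel triangle is the pull-back of $\delta_1$ along $y_2$. Concretely I would apply $(\mathrm{ET4})^{\op}$ to the composable deflations $M\xrightarrow{e_2}B_1\xrightarrow{y_1}C$ and use the factorization $y_1e_2=y_2e_1$ through the deflation $y_2$ (whose kernel triangle is $\delta_2$) to recognize $e_1$ as a deflation with kernel $A_1$ and connecting extension exactly $\delta_1\cdot y_2$, obtaining $m_1\colon A_1\to M$ with $e_2m_1=x_1$. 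This yields the middle row and the split left column, completing the diagram. I expect this to be the main obstacle: the map $e_1$ handed to us by $(\mathrm{ET3})$ is not a deflation a priori, so reconciling the two ways of cutting $M$ down to $C$ — equivalently, proving the homotopy pullback is symmetric in $B_1$ and $B_2$ — is where the genuine work lies, and it is precisely the step for which there is no abelian or triangulated shortcut in this setting.

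Finally I would verify $m_1\cdot\delta_1+m_2\cdot\delta_2=0$ in $\mbE(C,M)$. Writing $\zeta:=m_1\cdot\delta_1+m_2\cdot\delta_2$ and using bifunctoriality of $\mbE$ together with the identities $e_2m_2=0$, $e_1m_1=0$, $e_2m_1=x_1$, $e_1m_2=x_2$ and the vanishings $x_1\cdot\delta_1=0$, $x_2\cdot\delta_2=0$, one computes
\[
e_2\cdot\zeta=(e_2m_1)\cdot\delta_1+(e_2m_2)\cdot\delta_2=x_1\cdot\delta_1=0,\qquad e_1\cdot\zeta=(e_1m_1)\cdot\delta_1+(e_1m_2)\cdot\delta_2=x_2\cdot\delta_2=0,
\]
so that $\binom{e_2}{e_1}\cdot\zeta=0$. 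Since $M\xrightarrow{\binom{e_2}{e_1}}B_1\oplus B_2\xrightarrow{(y_1,-y_2)}C$ is the homotopy-pullback $\mbE$-triangle, the long exact sequence obtained by applying $\mbE(C,-)$ forces $\zeta=0$, once one checks that the possible correction term coming from $\Hom_{\mcC}(C,C)$ vanishes using the explicit form of the homotopy-pullback extension. Tracking this last vanishing, together with the signs in the map $(y_1,-y_2)$, is the only remaining delicate bookkeeping.
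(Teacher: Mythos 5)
This proposition is quoted in the paper from \cite{NP19} without proof, so the only question is whether your argument stands on its own; it does not. Your first step is fine: realizing $\delta_2\cdot y_1$ by $A_2\xrightarrow{m_2}M\xrightarrow{e_2}B_1$ and lifting the morphism of extensions $(1_{A_2},y_1)\colon\delta_2\cdot y_1\to\delta_2$ to $(1_{A_2},e_1,y_1)$ gives the top square and the lower-right square (though this uses the realization condition on morphisms of $\mbE$-extensions rather than (ET3) proper). The first genuine gap is the step you yourself flag as ``where the genuine work lies'': you never actually produce $m_1$ nor show that $A_1\xrightarrow{m_1}M\xrightarrow{e_1}B_2$ is an $\mfs$-conflation realizing $\delta_1\cdot y_2$. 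Applying (ET4)$^{\op}$ to the composable deflations $e_2$ and $y_1$ yields an $\mbE$-triangle $N\to M\xrightarrow{y_1e_2}C$ whose kernel object $N$ sits in a split extension of $A_1$ by $A_2$ (since $(\delta_2\cdot y_1)\cdot x_1=\delta_2\cdot(y_1x_1)=0$); it says nothing about the particular map $e_1$ you already fixed. There is no cancellation principle in an extriangulated category that lets you ``recognize'' $e_1$ as a deflation merely because $y_2e_1=y_1e_2$ is one, and this identification of $e_1$ with the deflation of a conflation realizing $\delta_1\cdot y_2$ is precisely the content of the proposition.

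The second gap is the verification of $m_1\cdot\delta_1+m_2\cdot\delta_2=0$. Your computation correctly gives $e_1\cdot\zeta=e_2\cdot\zeta=0$, but the long exact sequence associated to the Mayer--Vietoris triangle $M\to B_1\oplus B_2\to C\xdashrightarrow{\theta}$ only yields exactness of $\Hom_{\mcC}(C,C)\xrightarrow{\theta\cdot(-)}\mbE(C,M)\to\mbE(C,B_1\oplus B_2)$, hence $\zeta=\theta\cdot\phi$ for some $\phi\in\Hom_{\mcC}(C,C)$. Since $(y_1,y_2)$ is not a split epimorphism in general, $\theta\neq 0$ and this image need not vanish, so the ``correction term'' you defer to bookkeeping is exactly the obstruction and cannot be dismissed. (Worse, that Mayer--Vietoris triangle is itself deduced from this very proposition in \cite{NP19}, so invoking it here is circular unless you construct it independently.) The identity $m_1\cdot\delta_1+m_2\cdot\delta_2=0$ has to be built into the choice of $m_1$ during the construction, not checked a posteriori from the commutativity of the diagram.
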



\noindent \textbf{Closed subfunctors.} In what follows, we recall the notion of  closed subfunctors of $\mbE$ and establish some notation related to them.

\begin{definition}\cite[Definition 3.7]{HLN21}\label{dfn:subfunctor} Let $(\mcC ,\mbE,\mfs)$ be an extriangulated category. A functor $\mbF:\mcC^{\op} \times \mcC \to \mathrm{Set}$ is called a \textbf{subfunctor of $\mbE$} if it satisfies the following conditions:
\begin{enumerate}
    \item $\mbF(C,A)  \subseteq \mbE(C,A)$, for all $A,C \in \mcC.$
    \item $\mbF(c^{\op},a) = \mbE(c^{\op},a)\mid_{\mbF(C,A)}$ holds, for any $c \in \Hom_{\mcC}(C',C)$ and $\Hom_{\mcC}(A,A')$.
\end{enumerate}
In this case, we write $\mbF \subseteq \mbE$. If in addition, $\mbF(C,A)$ is an abelian subgroup of $\mbE(C,A)$ for any $C,A \in \mcC$, then we say that $\mbF$ is an \textbf{additive subfunctor} of $\mbE$.
\end{definition}

Let $\mbF$ be an additive subfunctor of $\mbE$. For a realization $\mfs$ of $\mbE$, we define $\mfs|_{\mbF}$ as the restriction of $\mfs$ onto $\mbF$, that is, $\mfs|_{\mbF}(\delta):=\mfs(\delta)$ for any $\mbF$-extension $\delta$. Thus, we say that $A \xrightarrow{x} B \xrightarrow{y}C$ is an \textit{$\mfs|_{\mbF}$-conflation} and $A \xrightarrow{x} B \xrightarrow{y}C \xrightarrow{\delta}$ is an \textit{$\mbF$-triangle}, if $A \xrightarrow{x} B \xrightarrow{y}C$ realizes an $\mbF$-extension $\delta$. In this case,  $x\colon A \to B$ and $y\colon B \to C$  are called \textit{$\mfs|_{\mbF}$-inflation} and \textit{$\mfs|_{\mbF}$-deflation}, respectively. 

\begin{definition}\cite[Definition 3.10]{HLN21} Let $\mbF$ be an additive subfunctor of $\mbE$. 
\begin{enumerate}
    \item $\mbF$ is \textbf{closed on the right} if, for any $\mfs|_{\mbF}$-conflation $A \xrightarrow{x} B \xrightarrow{y} C$ and $X \in \mcC$, the sequence
    \[ \mbF(X,A)\overset{\mathbb{F}(X,x)}{\longrightarrow} \mbF (X,B)\overset{\mathbb{F}(X,y)}{\longrightarrow}\mathbb{F}(X,C) \]
    is exact in $\mathrm{Ab}$.
    \item $\mbF$ is \textbf{closed on the left} if, for any $\mfs|_{\mbF}$-conflation $A \xrightarrow{x} B \xrightarrow{y} C$ and $X \in \mcC$, the sequence
     \[ \mbF(C,X)\overset{\mathbb{F}(y,X)}{\longrightarrow} \mbF (B,X)\overset{\mathbb{F}(x,X)}{\longrightarrow}\mathbb{F}(A,X) \]
    is exact in $\mathrm{Ab}$.
\end{enumerate}
We say that $\mbF$ is \textbf{closed} if it is both closed on the right and on the left (see \cite[Lemma 3.15]{HLN21}).
\end{definition}

The following characterization of closed subfunctors is the main result of \cite[Section 3.2]{HLN21}. This shows the relevance of studying closed subfunctors of $\mbE$ since they induce relative extriangulated structures.

\begin{proposition}\cite[Proposition 3.16]{HLN21} \label{HLN21:prop 3.16}  Let $(\mcC,\mbE,\mfs)$ be an extriangulated category. For any additive subfunctor $\mbF$ of $\mbE$, the following conditions are equivalent.
\begin{enumerate}
    \item $\mbF$ is closed.
    \item The class of $\mfs|_{\mbF}$-inflations is closed under composition.
    \item The class of $\mfs|_{\mbF}$-deflations is closed under composition.
    \item $(\mcC,\mbF,\mfs|_{\mbF})$ is an extriangulated category.
\end{enumerate}
\end{proposition}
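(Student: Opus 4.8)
The plan is to treat $(1)\Leftrightarrow(4)$ as the backbone and to read $(2)$ and $(3)$ as the one-sided shadows of the two octahedral axioms for the restricted datum $(\mcC,\mbF,\mfs|_{\mbF})$. The implications $(4)\Rightarrow(2)$ and $(4)\Rightarrow(3)$ should be immediate: in an extriangulated category the axioms (ET4) and (ET4)$^{\op}$ of \cite[Definition 2.12]{NP19} produce, from any two composable inflations (resp.\ deflations), a commutative diagram in which the composite again appears as an inflation (resp.\ deflation), so these axioms for $(\mcC,\mbF,\mfs|_{\mbF})$ say verbatim that $\mfs|_{\mbF}$-inflations and $\mfs|_{\mbF}$-deflations are closed under composition. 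Likewise $(4)\Rightarrow(1)$ should be free: the long exact sequences attached to an $\mbF$-triangle in the extriangulated category $(\mcC,\mbF,\mfs|_{\mbF})$ are exactly the sequences appearing in the definition of closed on the right and on the left, so $\mbF$ is closed. The real content therefore lies in producing the extriangulated structure, that is, in $(1)\Rightarrow(4)$ together with the reverse one-sided implications $(2)\Rightarrow(1)$ and $(3)\Rightarrow(1)$.

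For $(1)\Rightarrow(4)$ I would verify the axioms of \cite[Definition 2.12]{NP19} for $(\mcC,\mbF,\mfs|_{\mbF})$ and isolate the only nontrivial point. Biadditivity of $\mbF$ and additivity of the realization $\mfs|_{\mbF}$ hold because $\mbF$ is an additive subfunctor and $\mfs|_{\mbF}$ is a literal restriction of $\mfs$; the axioms (ET3) and (ET3)$^{\op}$ transport from $(\mcC,\mbE,\mfs)$ at once, since an $\mbF$-extension is realized by the very same conflation as in $\mbE$ and a morphism of $\mbF$-extensions is in particular a morphism of $\mbE$-extensions, so the lifting guaranteed by the ambient axiom is already a morphism of $\mbF$-triangles. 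The work is concentrated in (ET4) and (ET4)$^{\op}$. Here the strategy is to feed two composable $\mbF$-conflations into Proposition \ref{NP19:prop 3.15} (applied in the ambient category) to manufacture the required $3\times 3$ diagram of $\mbE$-triangles, and then to argue that every triangle occurring in it is in fact an $\mbF$-triangle. The ``diagonal'' extensions such as $\delta_1\cdot y_2$ and $\delta_2\cdot y_1$ are pullbacks of $\mbF$-extensions and so lie in $\mbF$ automatically by condition (2) of Definition \ref{dfn:subfunctor}; the single delicate extension is the one realized by the central object $M$, and certifying that it belongs to $\mbF$ is exactly where closedness of $\mbF$ is consumed.

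For $(2)\Rightarrow(1)$ and $(3)\Rightarrow(1)$ the idea is to reverse this bookkeeping and to match the two sides explicitly: that $\mfs|_{\mbF}$-inflations are closed under composition should be equivalent to $\mbF$ being closed on the left, and dually for deflations and closed on the right. Concretely, given an $\mbF$-triangle $A\xrightarrow{x}B\xrightarrow{y}C\xdashrightarrow{\delta}$ and a class $\theta\in\mbF(X,B)$ with $y\cdot\theta=0$, the ambient long exact sequence furnishes a lift $\theta'\in\mbE(X,A)$ with $x\cdot\theta'=\theta$, and what must be shown is that $\theta'$ already lies in $\mbF(X,A)$. Realizing $\theta$ and $\theta'$ and splicing the two conflations through Proposition \ref{NP19:prop 3.15}, the deflation-composition hypothesis is exactly what certifies that $\theta'$ is an $\mbF$-extension, supplying the missing inclusion in the exactness defining closed on the right. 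Since conditions (2) and (3) each address a single side, full closedness finally follows from the equivalence between closed on the left and closed on the right, recorded in \cite[Lemma 3.15]{HLN21} and itself obtainable from the same diagram.

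The main obstacle, present uniformly across all the nontrivial steps, is membership in $\mbF$: Proposition \ref{NP19:prop 3.15} lives in $(\mcC,\mbE,\mfs)$ and delivers only $\mbE$-triangles, so the whole difficulty is to control which of the manufactured extensions actually lie in the subgroups $\mbF(-,-)$. The subfunctor axioms dispose of everything built by pullback or pushforward for free, which localizes the problem onto a single central extension; showing that this one extension is an $\mbF$-extension is precisely equivalent to the composition-closure hypothesis, and this is the step I expect to require the most care. A secondary subtlety is that composition-closure is visibly one-sided whereas closedness is two-sided, so the left--right symmetry---not automatic for an arbitrary subfunctor---must be extracted rather than assumed.
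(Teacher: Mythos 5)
This proposition is quoted verbatim from \cite[Proposition 3.16]{HLN21}; the paper offers no proof of it (it appears among the preliminaries explicitly ``included without proofs''), so there is no in-paper argument to compare your proposal against. Judged on its own, your outline is broadly consistent with the strategy actually used in \cite{HLN21}: the directions $(4)\Rightarrow(1),(2),(3)$ come for free from restricting the ambient axioms and the long exact sequences, and the substantive content is the matching of one-sided closedness with one-sided composition-closure together with the left--right symmetry established in the lemmas preceding \cite[Proposition 3.16]{HLN21}.

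Two cautions are in order. First, for verifying (ET4) and (ET4)$^{\op}$ in $(1)\Rightarrow(4)$, the required diagram is produced by applying the \emph{ambient} (ET4)/(ET4)$^{\op}$ to the two composable conflations; Proposition \ref{NP19:prop 3.15} does not apply there, since it takes as input two $\mbE$-triangles sharing a common end term rather than two composable conflations. (Proposition \ref{NP19:prop 3.15} and its dual are the right tools for the exactness arguments in $(2),(3)\Rightarrow(1)$, where the two relevant triangles do share an end term.) Second, your sketch repeatedly locates the single point where membership in $\mbF$ of the one ``new'' extension must be certified --- the extension realized by the central object $M$, or by the cone/cocone of the composite --- but never carries out that certification. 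That verification is precisely the content of the proposition, so as written this is an accurate road map rather than a proof; to complete it one must exhibit the identity expressing the delicate extension as a pullback/pushforward of $\mbF$-extensions along an $\mfs|_{\mbF}$-deflation/inflation and then invoke the one-sided exactness hypothesis, exactly as in \cite{HLN21}.
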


\section{\textbf{Main Result}}\label{sec:main}
We begin this section by introducing the so-called $3\times3$-lemma property for subfunctors of $\mbE$ in an extriangulated category $(\mcC,\mbE,\mfs)$ and we study its relationship with closedness condition. The following outcome plays an important role for the rest of this work, so we include its proof below.

\begin{lemma}\label{lemma: Finfl-Fdefl}
    Let $(\mcC,\mbE,\mfs)$ be an extriangulated category, $\mbF$ be an additive subfunctor of $\mbE$ and $A \xrightarrow{f} B \xrightarrow{g} C \xdashrightarrow{\delta}$ be an $\mbE$-triangle. Then, $f$ is an $\mfs|_{\mbF}$-inflation if, and only if, $g$ is an $\mfs|_{\mbF}$-deflation.
\end{lemma}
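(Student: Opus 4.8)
The plan is to reduce both conditions to a single statement, namely that the given $\mbE$-triangle is already an $\mbF$-triangle, i.e. that $\delta\in\mbF(C,A)$. Concretely, I will establish
\[
f\ \text{is an}\ \mfs|_{\mbF}\text{-inflation}\iff \delta\in\mbF(C,A)\iff g\ \text{is an}\ \mfs|_{\mbF}\text{-deflation},
\]
after which the asserted equivalence follows by transitivity. In each of these two equivalences, the implication starting from $\delta\in\mbF(C,A)$ is immediate from the definitions: if $\delta\in\mbF(C,A)$, then $A\xrightarrow{f}B\xrightarrow{g}C$ realizes the $\mbF$-extension $\delta$, so $f$ is an $\mfs|_{\mbF}$-inflation and $g$ is an $\mfs|_{\mbF}$-deflation.

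For the converse on the inflation side, I assume $f$ is an $\mfs|_{\mbF}$-inflation. By definition this is witnessed by an $\mbF$-triangle $A\xrightarrow{f}B\xrightarrow{g'}C'\xdashrightarrow{\delta'}$ with $\delta'\in\mbF(C',A)$; I first observe that its middle term is necessarily $B$, being the codomain of $f$, so that the two $\mbE$-triangles share the inflation datum $A\xrightarrow{f}B$. I then apply axiom $(\mathrm{ET3})$ of \cite[Definition 2.12]{NP19} to the commutative square given by $1_A$ and $1_B$: it produces $c\in\Hom_{\mcC}(C,C')$ such that $(1_A,1_B,c)$ is a morphism of $\mbE$-triangles realizing $(1_A,c)\colon\delta\to\delta'$. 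The defining relation of a morphism of $\mbE$-extensions then reads $\delta=1_A\cdot\delta=\delta'\cdot c$. Since $\mbF$ is a subfunctor of $\mbE$, Definition \ref{dfn:subfunctor}(2) yields $\delta'\cdot c=\mbF(c,A)(\delta')\in\mbF(C,A)$, whence $\delta\in\mbF(C,A)$.

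The converse on the deflation side is strictly dual, using the opposite axiom $(\mathrm{ET3})^{\op}$. If $g$ is an $\mfs|_{\mbF}$-deflation, it is witnessed by an $\mbF$-triangle $A''\xrightarrow{f''}B\xrightarrow{g}C\xdashrightarrow{\delta''}$ with $\delta''\in\mbF(C,A'')$ and middle term $B$ (the domain of $g$). Applying $(\mathrm{ET3})^{\op}$ to the square given by $1_B$ and $1_C$ produces $a\in\Hom_{\mcC}(A'',A)$ realizing $(a,1_C)\colon\delta''\to\delta$, so that $\delta=a\cdot\delta''=\mbF(C,a)(\delta'')\in\mbF(C,A)$ by the same subfunctor property. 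Chaining the three equivalences then completes the argument.

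The step most likely to cause trouble is a temptation rather than a genuine obstacle: one is inclined to compare the two triangles through a universal property of the third term. As stressed in the introduction, $g'$ and $f''$ are merely weak cokernels and weak kernels, so the comparison morphisms $c$ and $a$ are neither canonical nor a priori invertible, and no usable uniqueness of the cone is available. The argument sidesteps this completely: invertibility of $c$ (resp.\ $a$) is never used, because $(\mathrm{ET3})$ already certifies that $(1_A,c)$ (resp.\ $(a,1_C)$) is a morphism of \emph{extensions}, and this single identity transports $\mbF$-membership along the functorial action. The only care required is to verify that the witnessing $\mbF$-triangle genuinely shares the object $B$, so that the identity square is a legitimate input to $(\mathrm{ET3})$ and $(\mathrm{ET3})^{\op}$, and to apply the subfunctor stability in the correct variable: the first variable $\mbF(c,A)$ for inflations, the second variable $\mbF(C,a)$ for deflations.
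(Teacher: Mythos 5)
Your proposal is correct and follows essentially the same route as the paper's proof: apply $(\mathrm{ET3})$ (resp. its dual) to the identity square on $A\xrightarrow{f}B$ (resp. $B\xrightarrow{g}C$) to obtain a comparison morphism $c$ (resp. $a$) with $\delta=\delta'\cdot c$ (resp. $\delta=a\cdot\delta''$), and then use the subfunctor property to conclude $\delta\in\mbF(C,A)$. Your write-up is merely more explicit about factoring both conditions through the single statement $\delta\in\mbF(C,A)$ and about spelling out the dual half, which the paper leaves to the reader.
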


\begin{proof}
    If $f$ is an $\mfs|_{\mbF}$-inflation, then there exists an $\mbF$-triangle $A \xrightarrow{f} B \xrightarrow{g^{\prime}} C^{\prime} \xdashrightarrow{\eta}$. By (ET3), there exists a morphism $c\colon C\to C^{\prime}$ in $\mcC$ making the following diagram commutative
    \[\begin{tikzcd}
	A & B & C & {} \\
	A & B & {C^{\prime}} & {},
	\arrow["f", from=1-1, to=1-2]
	\arrow[equals, from=1-1, to=2-1]
	\arrow["g", from=1-2, to=1-3]
	\arrow[equals, from=1-2, to=2-2]
	\arrow["\delta", dashed, from=1-3, to=1-4]
	\arrow["c", dashed, from=1-3, to=2-3]
	\arrow["f"', from=2-1, to=2-2]
	\arrow["{g^{\prime}}"', from=2-2, to=2-3]
	\arrow["\eta"', dashed, from=2-3, to=2-4]
\end{tikzcd}\] and then $\eta \cdot c=\delta$. Since $\mbF$ is an additive subfunctor, we have that $\delta\in\mbF(A,C)$ and so $g$ is an $\mfs|_{\mbF}$-deflation. Dually, the converse implication also holds.
\end{proof}

The following definition establishes the analogue of $3 \times 3$-lemma property in extriangulated categories. This concept originally appeared in \cite{buan_closed_2001} for abelian categories.

\begin{definition}\label{dfn: 3x3 property}
    Let $(\mcC,\mbE,\mfs)$ be an extriangulated category and let $\mbF$ be an additive subfunctor of $\mbE$. We say that $\mbF$ has the \textbf{$3\times 3$-lemma property} if for every commutative diagram of $\mbE$-triangles 
    \[\begin{tikzcd}
	A & B & C & {} \\
	D & E & G & {} \\
	H & I & J & {} \\
	{} & {} & {}
	\arrow["a", from=1-1, to=1-2]
	\arrow["i"', from=1-1, to=2-1]
	\arrow["b", from=1-2, to=1-3]
	\arrow["j", from=1-2, to=2-2]
	\arrow["{\varepsilon_1}", dashed, from=1-3, to=1-4]
	\arrow["c", from=1-3, to=2-3]
	\arrow["d"', from=2-1, to=2-2]
	\arrow["k"', from=2-1, to=3-1]
	\arrow["e"', from=2-2, to=2-3]
	\arrow["l", from=2-2, to=3-2]
	\arrow["{\varepsilon_2}", dashed, from=2-3, to=2-4]
	\arrow["f", from=2-3, to=3-3]
	\arrow["g"', from=3-1, to=3-2]
	\arrow["{\eta_1}"', dashed, from=3-1, to=4-1]
	\arrow["h"', from=3-2, to=3-3]
	\arrow["{\eta_2}", dashed, from=3-2, to=4-2]
	\arrow["{\varepsilon_3}", dashed, from=3-3, to=3-4]
	\arrow["{\eta_3}", dashed, from=3-3, to=4-3]
\end{tikzcd}\] where $(i,j,c)$ is a morphism of $\mbE$-triangles, if all the columns and outer rows are $\mbF$-triangles, then the middle row is also an $\mbF$-triangle.
\end{definition}

As we mentioned in the previous section, studying closed subfunctors of $\mbE$ is relevant as they induce relative extriangulated structures. The $3\times 3$-lemma property allows us to add another equivalent condition to those given in Proposition \ref{HLN21:prop 3.16}, which generalizes \cite[Lemma 2.2.1]{buan_closed_2001} to the context of extriangulated categories.


\begin{theorem} \label{thm:3x3 equiv closed} Let $(\mcC,\mbE,\mfs)$ be an extriangulated category. For any additive subfunctor $\mbF$ of $\mbE$, the following conditions are equivalent.
    \begin{enumerate}
        \item $\mbF$ has the $3\times 3$-lemma property.
        \item $\mbF$ is closed. 
    \end{enumerate} 
\end{theorem}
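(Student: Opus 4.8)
The plan is to prove both implications of Theorem \ref{thm:3x3 equiv closed} by exploiting the characterization of closedness via composition of inflations/deflations (Proposition \ref{HLN21:prop 3.16}) together with Lemma \ref{lemma: Finfl-Fdefl}, which lets me freely translate between $\mfs|_{\mbF}$-inflations and $\mfs|_{\mbF}$-deflations. The harder direction is $(2)\Rightarrow(1)$: assuming $\mbF$ is closed, I must show it has the $3\times3$-lemma property. So suppose given a commutative diagram of $\mbE$-triangles as in Definition \ref{dfn: 3x3 property} in which all columns and both outer rows are $\mbF$-triangles. Since closedness means $(\mcC,\mbF,\mfs|_{\mbF})$ is itself extriangulated (condition (4)), the key point is that the middle row $D \xrightarrow{d} E \xrightarrow{e} G \xdashrightarrow{\varepsilon_2}$ will be an $\mbF$-triangle as soon as I can certify that $\varepsilon_2 \in \mbF(G,D)$. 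The strategy is to exhibit $\varepsilon_2$ as built out of extensions already known to lie in $\mbF$, using that $\mbF$ is a subfunctor (so closed under the actions $a\cdot(-)$ and $(-)\cdot c$) and an additive subgroup.

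For $(2)\Rightarrow(1)$ concretely, I would first use that $i\colon A\to D$ is an $\mfs|_{\mbF}$-inflation (the left column is an $\mbF$-triangle) and $d\colon D\to E$ is an $\mfs|_{\mbF}$-inflation (to be shown) — rather, I work from what is given: the top row gives $a$ an $\mfs|_{\mbF}$-inflation, the columns give $i,j,c$ inflations/deflations in $\mbF$. The cleanest route is to show $e\colon E\to G$ is an $\mfs|_{\mbF}$-deflation, for then Lemma \ref{lemma: Finfl-Fdefl} forces $\varepsilon_2\in\mbF$ and the middle row is an $\mbF$-triangle. To see $e$ is an $\mfs|_{\mbF}$-deflation I use Proposition \ref{HLN21:prop 3.16}(3): the class of $\mfs|_{\mbF}$-deflations is closed under composition. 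The composite $hl\colon E\to J$ (or an analogous composite through the commuting square) factors as a composition of deflations coming from the middle column and bottom row, both of which are in $\mbF$ by hypothesis; closure under composition then controls $e$. I expect to need Proposition \ref{NP19:prop 3.15} to produce an auxiliary $\mbE$-triangle realizing a pullback/pushout that identifies $e$ (or a cofactor of it) as a genuine $\mfs|_{\mbF}$-deflation, and then to read off that $\varepsilon_2$ is an $\mbF$-extension.

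For the converse $(1)\Rightarrow(2)$, assuming $\mbF$ has the $3\times3$-lemma property, I would verify condition (2) or (3) of Proposition \ref{HLN21:prop 3.16}, say that $\mfs|_{\mbF}$-deflations are closed under composition. Given two composable $\mfs|_{\mbF}$-deflations, the standard move is to assemble them into a $3\times3$ commutative diagram of $\mbE$-triangles in which the outer rows and all columns are arranged to be $\mbF$-triangles — here Proposition \ref{NP19:prop 3.15} is exactly the tool that constructs such a diagram from a pair of $\mbE$-triangles sharing a common term, yielding the commutative grid with the required morphism of $\mbE$-triangles on the appropriate face. The $3\times3$-lemma property then declares the middle row an $\mbF$-triangle, and reading off its deflation (via Lemma \ref{lemma: Finfl-Fdefl}) shows the composite deflation lies in $\mbF$, establishing closedness.

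The main obstacle I anticipate is the bookkeeping in $(2)\Rightarrow(1)$: verifying that the relevant composite really is a composition of two $\mfs|_{\mbF}$-deflations (not merely $\mfs$-deflations), since the $3\times3$ diagram mixes horizontal and vertical $\mbE$-triangles, and one must track carefully which extensions are assumed to be in $\mbF$ versus merely in $\mbE$. In particular, identifying $\varepsilon_2$ explicitly as an element of $\mbF(G,D)$ may require combining the additivity of $\mbF$ with the relation $m_1\cdot\delta_1+m_2\cdot\delta_2=0$ from Proposition \ref{NP19:prop 3.15}, and some care is needed because universal properties are not available in this setting, so I must argue directly with the realization $\mfs$ rather than appealing to uniqueness of pullbacks or pushouts.
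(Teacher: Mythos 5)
Your outline of $(1)\Rightarrow(2)$ is essentially the paper's argument (build a $3\times 3$ grid from the two composable $\mfs|_{\mbF}$-deflations, pad with split triangles, and read off the middle row), though the tool that produces the grid is (ET4)$^{\op}$ applied to $D\to E\to B$ and $A\to B\to C$, not Proposition \ref{NP19:prop 3.15}, which requires two $\mbE$-triangles with the \emph{same third term} and so does not fit this configuration. That is a fixable misattribution in a sketch.

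The direction $(2)\Rightarrow(1)$, however, contains a genuine gap. Your primary strategy is to show that $e\colon E\to G$ is an $\mfs|_{\mbF}$-deflation by observing that $fe=hl$ is a composite of two $\mfs|_{\mbF}$-deflations and invoking ``closure under composition.'' But closure under composition only tells you that the \emph{composite} $fe$ is an $\mfs|_{\mbF}$-deflation; it gives no cancellation, i.e.\ no way to conclude that the factor $e$ is one. There is no analogue of Quillen's obscure axiom available here that would let you strip off $f$ from $fe$, and indeed this is exactly the point where the proof requires a new idea. The paper's argument instead targets $d$ rather than $e$: it applies the dual of Proposition \ref{NP19:prop 3.15} to the top row and the left column (two $\mbE$-triangles sharing the term $A$) to produce an object $M$ and a factorization $d=vu$, where $D\xrightarrow{u}M\xrightarrow{x}C$ realizes $i\cdot\varepsilon_1=\varepsilon_2\cdot c\in\mbF(C,D)$ (so $u$ is an $\mfs|_{\mbF}$-inflation by the subfunctor property), and then uses (ET4)$^{\op}$ to place $v\colon M\to E$ in an $\mbE$-triangle $M\xrightarrow{v}E\xrightarrow{y}J$ with $y=fe=hl$; \emph{here} is where closure under composition is used legitimately, to see that $y$ is an $\mfs|_{\mbF}$-deflation, whence $v$ is an $\mfs|_{\mbF}$-inflation by Lemma \ref{lemma: Finfl-Fdefl}, and finally $d=vu$ is an $\mfs|_{\mbF}$-inflation, forcing $\varepsilon_2\in\mbF(G,D)$. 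Your closing paragraph correctly anticipates that an auxiliary triangle and careful bookkeeping are needed, but the sketch as written does not identify this factorization of $d$, and the route it does commit to would fail at the cancellation step.
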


\begin{proof}
    $(1)\Rightarrow(2)$.\ According to Proposition \ref{HLN21:prop 3.16}, it suffices to see that the class of $\mfs|_{\mbF}$-deflations is closed under composition. Let $A \xrightarrow{a} B \xrightarrow{b} C \xdashrightarrow{\delta}$ and $D \xrightarrow{d} E \xrightarrow{e} B\xdashrightarrow{\eta}$ be two $\mbF$-triangles. By (ET4)$^{\op}$, there is a commutative diagram of $\mbE$-triangles 
    \[\begin{tikzcd}
	D & D \\
	G & E & C & {} \\
	A & B & C & {} \\
	{} & {}
	\arrow[equals, from=1-1, to=1-2]
	\arrow["f"', from=1-1, to=2-1]
	\arrow["d", from=1-2, to=2-2]
	\arrow[from=2-1, to=2-2]
	\arrow["g"', from=2-1, to=3-1]
	\arrow[from=2-2, to=2-3]
	\arrow["e", from=2-2, to=3-2]
	\arrow["\theta", dashed, from=2-3, to=2-4]
	\arrow[equals, from=2-3, to=3-3]
	\arrow["a"', from=3-1, to=3-2]
	\arrow["\mu"', dashed, from=3-1, to=4-1]
	\arrow["b"', from=3-2, to=3-3]
	\arrow["\eta", dashed, from=3-2, to=4-2]
	\arrow["\delta"', dashed, from=3-3, to=3-4]
\end{tikzcd}\] such that $g\cdot\theta=\delta$ and $\eta\cdot a=\mu$. This diagram can be completed up to a commutative diagram of $\mbE$-triangles as follows: 
\begin{equation}\label{eq:diag1}
    \begin{tikzcd}
	D & D & 0 & {} \\
	G & E & C & {} \\
	A & B & C & {} \\
	{} & {} & {}
	\arrow[equals, from=1-1, to=1-2]
	\arrow["f"', from=1-1, to=2-1]
	\arrow[from=1-2, to=1-3]
	\arrow["d", from=1-2, to=2-2]
	\arrow["{0_{0,D}}", dashed, from=1-3, to=1-4]
	\arrow[from=1-3, to=2-3]
	\arrow[from=2-1, to=2-2]
	\arrow["g"', from=2-1, to=3-1]
	\arrow[from=2-2, to=2-3]
	\arrow["e", from=2-2, to=3-2]
	\arrow["\theta", dashed, from=2-3, to=2-4]
	\arrow[equals, from=2-3, to=3-3]
	\arrow["a"', from=3-1, to=3-2]
	\arrow["\mu"', dashed, from=3-1, to=4-1]
	\arrow["b"', from=3-2, to=3-3]
	\arrow["\eta", dashed, from=3-2, to=4-2]
	\arrow["\delta"', dashed, from=3-3, to=3-4]
	\arrow["{0_{C,0}}", dashed, from=3-3, to=4-3]
    \end{tikzcd}
\end{equation}
    Since $\mu= \eta \cdot a$ is an $\mbF$-extension, then all columns and the first and third rows in diagram \eqref{eq:diag1} are $\mbF$-triangles. Given that $\mbF$ has the $3\times3$-lemma property, it follows that $G \xrightarrow{} E \xrightarrow{be} C \xdashrightarrow{\theta}$ is an $\mbF$-triangle and so the morphism $be$ is an $\mfs|_{\mbF}$-deflation.\\
    $(2)\Rightarrow(1)$. Let $\mbF$ be a closed subfunctor of $\mbE$. Let us assume that we have a commutative diagram of $\mbE$-triangles 
    \begin{equation}\label{eq:3x3(*)}
        \begin{tikzcd}
	A & B & C & {} \\
	D & E & G & {} \\
	H & I & J & {} \\
	{} & {} & {}
	\arrow["a", from=1-1, to=1-2]
	\arrow["i"', from=1-1, to=2-1]
	\arrow["b", from=1-2, to=1-3]
	\arrow["j", from=1-2, to=2-2]
	\arrow["{\varepsilon_1}", dashed, from=1-3, to=1-4]
	\arrow["c", from=1-3, to=2-3]
	\arrow["d"', from=2-1, to=2-2]
	\arrow["k"', from=2-1, to=3-1]
	\arrow["e"', from=2-2, to=2-3]
	\arrow["l", from=2-2, to=3-2]
	\arrow["(\ast)"{anchor=center, rotate=0}, draw=none, from=2-2, to=3-3]
	\arrow["{\varepsilon_2}", dashed, from=2-3, to=2-4]
	\arrow["f", from=2-3, to=3-3]
	\arrow["g"', from=3-1, to=3-2]
	\arrow["{\eta_1}"', dashed, from=3-1, to=4-1]
	\arrow["h"', from=3-2, to=3-3]
	\arrow["{\eta_2}", dashed, from=3-2, to=4-2]
	\arrow["{\varepsilon_3}", dashed, from=3-3, to=3-4]
	\arrow["{\eta_3}", dashed, from=3-3, to=4-3]
    \end{tikzcd}
    \end{equation} where $\varepsilon_1,\varepsilon_3,\eta_1,\eta_2,\eta_3$ are all $\mbF$-extensions and such that $i\cdot \varepsilon_1=\varepsilon_2\cdot c$. In order to see that $D \xrightarrow{d} E \xrightarrow{e} G \xdashrightarrow{\varepsilon_2}$ is an $\mbF$-triangle, we shall show that $d$ is an $\mfs|_{\mbF}$-inflation.\\
By the dual of Proposition \ref{NP19:prop 3.15}, we have the following commutative diagram \[\begin{tikzcd}
	A & B & C & {} \\
	D & M & C & {} \\
	H & H && {} \\
	{} & {} & {}
	\arrow["a", from=1-1, to=1-2]
	\arrow["i"', from=1-1, to=2-1]
	\arrow["b", from=1-2, to=1-3]
	\arrow["q", from=1-2, to=2-2]
	\arrow["{\varepsilon_1}", dashed, from=1-3, to=1-4]
	\arrow[equals, from=1-3, to=2-3]
	\arrow["u"', from=2-1, to=2-2]
	\arrow["k"', from=2-1, to=3-1]
	\arrow["x"', from=2-2, to=2-3]
	\arrow["p", from=2-2, to=3-2]
	\arrow["{i\cdot\varepsilon_1}"', dashed, from=2-3, to=2-4]
	\arrow[equals, from=3-1, to=3-2]
	\arrow["{\eta_1}"', dashed, from=3-1, to=4-1]
	\arrow["{a\cdot\eta_1}", dashed, from=3-2, to=4-2]
\end{tikzcd}.\] 
Now $i\cdot\varepsilon_1\in\mbF(C,D)$ and $a \cdot\eta_1\in\mbF(H,B)$ since $\mbF$ is an additive subfunctor of $\mbE$.\\
On the other hand, by (ET4)$^{\op}$, we have a commutative diagram 
\[\begin{tikzcd}
	D & D \\
	M & E & J & {} \\
	C & G & J & {} \\
	{} & {}
	\arrow[equals, from=1-1, to=1-2]
	\arrow["u"', from=1-1, to=2-1]
	\arrow["d", from=1-2, to=2-2]
	\arrow["v", from=2-1, to=2-2]
	\arrow["x"', from=2-1, to=3-1]
	\arrow["y", from=2-2, to=2-3]
	\arrow["e", from=2-2, to=3-2]
	\arrow["\theta", dashed, from=2-3, to=2-4]
	\arrow[equals, from=2-3, to=3-3]
	\arrow["c"', from=3-1, to=3-2]
	\arrow["{\varepsilon_2\cdot c=i\cdot\varepsilon_1}"', dashed, from=3-1, to=4-1]
	\arrow["f"', from=3-2, to=3-3]
	\arrow["{\varepsilon_2}", dashed, from=3-2, to=4-2]
	\arrow["{\eta_3}"', dashed, from=3-3, to=3-4]
\end{tikzcd}.\] 
Observe that the commutativity $(\ast)$ in diagram \eqref{eq:3x3(*)} yields $y=fe=hl$. Given that $\mbF$ is closed and $h$,$l$ are both $\mfs|_{\mbF}$-deflations, it follows by Proposition \ref{HLN21:prop 3.16} that $y=hl$ is also an $\mfs|_{\mbF}$-deflation and thus $v$ is an $\mfs|_{\mbF}$-inflation from Lemma \ref{lemma: Finfl-Fdefl}. Now we get that $d=vu$ is a composition of $\mfs|_{\mbF}$-inflations which again by Proposition \ref{HLN21:prop 3.16} implies that $d$ is an $\mfs|_{\mbF}$-inflation.
\end{proof}

As a consequence of the previous result, we get the following analogous of \cite[Corollary 3.6]{Bu} given in the context of exact categories.

\begin{corollary} Let $(\mcC,\mbE,\mfs)$ be an extriangulated category and let $\mbF$ be a closed subfunctor of $\mbE$. Consider a commutative diagram of $\mbE$-triangles 
\begin{equation}\label{3x3 diagram}
\begin{tikzcd}
	A & B & C & {} \\
	D & E & G & {} \\
	H & I & J & {} \\
	{} & {} & {}
	\arrow["a", from=1-1, to=1-2]
	\arrow["i"', from=1-1, to=2-1]
	\arrow["b", from=1-2, to=1-3]
	\arrow["j", from=1-2, to=2-2]
	\arrow["{\varepsilon_1}", dashed, from=1-3, to=1-4]
	\arrow["c", from=1-3, to=2-3]
	\arrow["d"', from=2-1, to=2-2]
	\arrow["k"', from=2-1, to=3-1]
	\arrow["e"', from=2-2, to=2-3]
	\arrow["l", from=2-2, to=3-2]
	\arrow["{\varepsilon_2}", dashed, from=2-3, to=2-4]
	\arrow["f", from=2-3, to=3-3]
	\arrow["g"', from=3-1, to=3-2]
	\arrow["{\eta_1}"', dashed, from=3-1, to=4-1]
	\arrow["h"', from=3-2, to=3-3]
	\arrow["{\eta_2}", dashed, from=3-2, to=4-2]
	\arrow["{\varepsilon_3}", dashed, from=3-3, to=3-4]
	\arrow["{\eta_3}", dashed, from=3-3, to=4-3]
\end{tikzcd}\end{equation}
where all columns are $\mbF$-triangles, such that $(i,c):\varepsilon_1 \to \varepsilon_2$ and $(k,f):\varepsilon_2 \to \varepsilon_3$ are morphisms of $\mbE$-extensions. Then, the following conditions are equivalent.
\begin{enumerate}
    \item The middle row of (\ref{3x3 diagram}) is an $\mbF$-triangle.
    \item The first and third rows of (\ref{3x3 diagram}) are $\mbF$-triangles.
\end{enumerate}

\end{corollary}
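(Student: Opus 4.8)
The plan is to handle the two implications by different means: $(2)\Rightarrow(1)$ is the $3\times 3$-lemma property itself, whereas $(1)\Rightarrow(2)$ is a direct chase with $\mbE$-extensions. For $(2)\Rightarrow(1)$, since $\mbF$ is closed it has the $3\times 3$-lemma property by Theorem \ref{thm:3x3 equiv closed}. The commuting squares $di=ja$ and $ej=cb$ of \eqref{3x3 diagram}, together with the hypothesis that $(i,c)\colon\varepsilon_1\to\varepsilon_2$ is a morphism of $\mbE$-extensions (i.e. $i\cdot\varepsilon_1=\varepsilon_2\cdot c$), say precisely that $(i,j,c)$ is a morphism of $\mbE$-triangles from the first row to the second. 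Since all columns and, by (2), the first and third rows are $\mbF$-triangles, Definition \ref{dfn: 3x3 property} applies verbatim and yields that the middle row is an $\mbF$-triangle. This implication is thus immediate, and notably does not use the extra hypothesis $(k,f)\colon\varepsilon_2\to\varepsilon_3$.

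For $(1)\Rightarrow(2)$ I would show $\varepsilon_3\in\mbF(J,H)$ and $\varepsilon_1\in\mbF(C,A)$ separately, by dual arguments; consider the third row. The hypothesis $(k,f)\colon\varepsilon_2\to\varepsilon_3$ reads $k\cdot\varepsilon_2=\varepsilon_3\cdot f$ in $\mbE(G,H)$, and since $\varepsilon_2\in\mbF$ by (1) and $\mbF$ is a subfunctor, the element $k\cdot\varepsilon_2$, hence $\varepsilon_3\cdot f$, lies in $\mbF(G,H)$. From $fc=0$ we get $(\varepsilon_3\cdot f)\cdot c=\varepsilon_3\cdot(fc)=0$, so $\varepsilon_3\cdot f$ lies in the kernel of $\mbF(c,H)$. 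As $\mbF$ is closed on the left, the sequence $\mbF(J,H)\xrightarrow{\cdot f}\mbF(G,H)\xrightarrow{\cdot c}\mbF(C,H)$ is exact, so $\varepsilon_3\cdot f=\delta'\cdot f$ for some $\delta'\in\mbF(J,H)$. Thus $(\varepsilon_3-\delta')\cdot f=0$, and the long exact sequence attached to the third column identifies the kernel of $\mbE(f,H)$ with the image of the connecting map $\Hom_\mcC(C,H)\to\mbE(J,H)$, $\phi\mapsto\phi\cdot\eta_3$; hence $\varepsilon_3-\delta'=\phi\cdot\eta_3$ for some $\phi$. Since $\eta_3\in\mbF$ (the third column is an $\mbF$-triangle) and $\mbF$ is a subfunctor, $\phi\cdot\eta_3\in\mbF(J,H)$, so $\varepsilon_3=\delta'+\phi\cdot\eta_3\in\mbF(J,H)$ and the third row is an $\mbF$-triangle. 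The first row is treated dually: from $i\cdot\varepsilon_1=\varepsilon_2\cdot c\in\mbF(C,D)$ and $ki=0$, closedness on the right of the first column together with the connecting map $\Hom_\mcC(C,H)\to\mbE(C,A)$, $\psi\mapsto\eta_1\cdot\psi$, and $\eta_1\in\mbF$, yield $\varepsilon_1\in\mbF(C,A)$.

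The only step reaching beyond the statements recalled in the excerpt, and the place where care is needed, is the identification of $\ker\big(\mbE(f,H)\big)$ with the pushouts $\phi\cdot\eta_3$ of $\eta_3$: this is the ambient long exact sequence of an $\mbE$-triangle in $(\mcC,\mbE,\mfs)$, which must be combined with the $\mbF$-level exactness furnished by closedness. The conceptual subtlety it resolves is that an $\mfs|_{\mbF}$-deflation need not be an epimorphism, so one cannot simply \emph{cancel} $f$ in $\varepsilon_3\cdot f=\delta'\cdot f$; one must instead prove that every $\mbE$-extension annihilated by $f$ is already an $\mbF$-extension. Apart from this, the argument is bookkeeping, the main caution being to keep the variances of the two actions $a\cdot(-)$ and $(-)\cdot c$ consistent throughout the two dual computations.
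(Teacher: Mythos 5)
Your proof is correct and follows essentially the same route as the paper: $(2)\Rightarrow(1)$ is Theorem \ref{thm:3x3 equiv closed} applied verbatim, and $(1)\Rightarrow(2)$ reduces to showing that an $\mbE$-extension whose restriction along an $\mfs|_{\mbF}$-deflation (resp.\ whose pushforward along an $\mfs|_{\mbF}$-inflation) lies in $\mbF$ must itself lie in $\mbF$. The paper cites \cite[Lemma 3.14]{HLN21} and its dual for this last step, whereas you reprove it from closedness together with the long exact sequence of an $\mbE$-triangle; your argument is in effect a correct self-contained proof of that cited lemma, and it rightly identifies why one cannot simply cancel $f$.
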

\begin{proof} Since $\mbF$ is a closed subfunctor of $\mbE$, it follows from Theorem \ref{thm:3x3 equiv closed} that $\mbF$ has the $3\times 3$-lemma property. Hence, (2) $\Rightarrow$ (1) holds.

Let us suppose that $\varepsilon_2$ is an $\mbF$-extension. Then, we have that $\varepsilon_3 \cdot f = k \cdot \varepsilon_2 \in \mbF(G,H)$ and $i\cdot \varepsilon_1 = \varepsilon_2 \cdot c \in \mbF(C,D)$ with $f$ an $\mfs|_\mbF$-deflation and $i$ an $\mfs|_\mbF$-inflation. Therefore, from \cite[Lemma 3.14]{HLN21} and its dual we can conclude that $\varepsilon_1$ and $\varepsilon_2$ are $\mbF$-extensions.
\end{proof}

\begin{remark} After a careful revision of the proof of Theorem \ref{thm:3x3 equiv closed}, we observe that it remains true when we assume that $(k,l,f)$ is a morphism of $\mbE$-triangles instead of $(i,j,c)$ being one. The proof follows by using dual arguments.
\end{remark}

\noindent \textbf{Relation with saturated proper classes.} We finish this work giving outcomes which relates the developed theory with saturated and proper classes given in \cite{beligiannis2000relative,HZZ20}. We begin by recalling the following definition.

\begin{definition}\cite[Definition 3.1]{HZZ20}. Let $(\mcC,\mbE,\mfs)$ be an extriangulated category and $\xi$ a class of $\mbE$-triangles closed under isomorphisms. We say that $\xi$ is a \textbf{proper class} if the following conditions hold.
\begin{enumerate} 
    \item $\xi$ contains all the split $\mbE$-triangles.
    \item $\xi$ is closed under finite coproducts.
    \item $\xi$ is closed under base change and co-base change.
\end{enumerate}
Moreover, $\xi$ is \textbf{saturated} if it satisfies the following condition: in the situation of the diagram (\ref{diag:NP3.15}), if the third column and the middle row are $\mbE$-triangles in $\xi$, then the third row also belongs to $\xi$.
\end{definition}

Some references, as in \cite{HZZ20}, consider the saturatedness condition as part of the definition of proper classes. In this work, a proper class will not necessarily be saturated (see \cite[Appendix A]{sakai2023relative}). 

Let $(\mcC,\mbE,\mfs)$ be an extriangulated category and let $\xi$ be a class of $\mbE$-triangles closed under isomorphisms. For any $A,C \in \mcC$, set 
\[ \mbE_{\xi}(C,A):=\{\delta \in \mbE(C,A) \mid \text{ there exists an }\mbE\text{-triangle } A \to B \to C \xdashrightarrow{\delta} \in \xi \; \}.  \]
It was proven in \cite[Theorem 3.2]{HZZ20} that $\xi$ is a saturated proper class if, and only if, $(\mcC,\mbE_{\xi},\mfs|_{\mbE_{\xi}})$ is an extriangulated category, and then $\mbE_{\xi}$ is a closed subfunctor of $\mbE$ from Proposition \ref{HLN21:prop 3.16}. Moreover, this correspondence is bijective as shown in the following proposition.

\begin{proposition}\cite[Proposition A4.]{sakai2023relative} \label{prop:csubf-propercls}
Let $(\mcC,\mbE,\mfs)$ be an extriangulated category. Then, the following conditions hold.
\begin{enumerate}
    \item The correspondence $\xi \mapsto \mbE_{\xi}$ between proper classes of $\mbE$-triangles and additive subfunctors of $\mbE$ is bijective.
    \item In the above correspondence,  saturated proper classes corresponds to closed subfunctors of $\mbE$.
\end{enumerate}
\end{proposition}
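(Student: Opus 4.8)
The plan is to exhibit an explicit inverse to the assignment $\xi\mapsto\mbE_{\xi}$ and then derive part (2) by combining part (1) with the extriangulation criteria of \cite{HZZ20,HLN21}. For the candidate inverse I would send an additive subfunctor $\mbF$ to the class $\xi_{\mbF}$ of all $\mbF$-triangles, that is, those $\mbE$-triangles $A\xrightarrow{x}B\xrightarrow{y}C\xdashrightarrow{\delta}$ with $\delta\in\mbF(C,A)$. The first task is to verify that $\mbE_{\xi}$ is an additive subfunctor whenever $\xi$ is proper. The inclusion $\mbE_{\xi}(C,A)\subseteq\mbE(C,A)$ is immediate, and the subfunctor identity $\mbE_{\xi}(c^{\op},a)=\mbE(c^{\op},a)\mid_{\mbE_{\xi}(C,A)}$ of Definition \ref{dfn:subfunctor} follows from closure of $\xi$ under base and co-base change: if $\delta\in\mbE_{\xi}(C,A)$ is realized by a triangle in $\xi$, then so are $\delta\cdot c$ and $a\cdot\delta$. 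For additivity, $0\in\mbE_{\xi}(C,A)$ because $\xi$ contains the split $\mbE$-triangles, and $-\delta=(-1_A)\cdot\delta$ lies in $\mbE_{\xi}(C,A)$ by co-base change; closure under the Baer sum I would extract from the formula $\delta+\delta'=\nabla_A\cdot(\delta\oplus\delta')\cdot\Delta_C$, since $\xi$ is closed under finite coproducts forces $\delta\oplus\delta'\in\mbE_{\xi}(C\oplus C,A\oplus A)$, and base change along the diagonal $\Delta_C$ followed by co-base change along the codiagonal $\nabla_A$ then keeps us inside $\mbE_{\xi}$.

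Conversely I would check that $\xi_{\mbF}$ is a proper class: it contains the split triangles because $0\in\mbF(C,A)$; it is closed under finite coproducts because $\delta\oplus\delta'\in\mbF(C\oplus C',A\oplus A')$ by additivity of $\mbF$; and it is closed under base and co-base change because $\mbF$ is a subfunctor. It then remains to see that the two assignments are mutually inverse. The identity $\mbE_{\xi_{\mbF}}=\mbF$ is direct, since every $\delta$ admits a realization $\mfs(\delta)$, so $\delta\in\mbE_{\xi_{\mbF}}(C,A)$ precisely when some realization of $\delta$ is an $\mbF$-triangle, i.e.\ precisely when $\delta\in\mbF(C,A)$. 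The identity $\xi_{\mbE_{\xi}}=\xi$ is where the subtlety lies: the inclusion $\xi\subseteq\xi_{\mbE_{\xi}}$ is clear, but for the reverse a triangle $A\xrightarrow{x}B\xrightarrow{y}C\xdashrightarrow{\delta}$ in $\xi_{\mbE_{\xi}}$ only guarantees that \emph{some} $\mbE$-triangle $A\to B'\to C\xdashrightarrow{\delta}$ belongs to $\xi$. To conclude that the original triangle is in $\xi$ I would invoke the fact that two triangles realizing the same extension $\delta$ are isomorphic via some $(1_A,b,1_C)$ (built into the notion of realization in \cite{NP19}), together with closure of $\xi$ under isomorphisms. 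This realization-uniqueness step is the key point and the expected main obstacle.

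Finally, part (2) should follow formally by combining the bijection of part (1) with known criteria. By \cite[Theorem 3.2]{HZZ20}, a proper class $\xi$ is saturated if and only if $(\mcC,\mbE_{\xi},\mfs\mid_{\mbE_{\xi}})$ is an extriangulated category; by Proposition \ref{HLN21:prop 3.16} (conditions (1) and (4)), the latter holds if and only if $\mbE_{\xi}$ is a closed subfunctor of $\mbE$, which one may also phrase through Theorem \ref{thm:3x3 equiv closed}. Hence, under the bijection $\xi\mapsto\mbE_{\xi}$, the saturated proper classes are carried exactly onto the closed subfunctors. I would expect essentially all of the work to sit in part (1), namely the additivity of $\mbE_{\xi}$ via the Baer-sum formula and the realization-uniqueness argument identifying $\xi_{\mbE_{\xi}}$ with $\xi$; part (2) is then largely bookkeeping.
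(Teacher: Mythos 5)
The paper does not prove this proposition at all: it is imported verbatim from Sakai's appendix (\cite[Proposition A4]{sakai2023relative}), so there is no internal proof to compare against. Judged on its own, your argument is correct and is the standard one (and, as far as I can tell, essentially the one in the cited reference). All the key points are in place: the subfunctor axioms for $\mbE_{\xi}$ come from closure under (co-)base change, additivity comes from the split triangles, the identity $-\delta=(-1_A)\cdot\delta$, and the Baer-sum formula $\delta+\delta'=\nabla_A\cdot(\delta\oplus\delta')\cdot\Delta_C$ combined with closure under finite coproducts; and the ``realization-uniqueness'' step you flag as the main obstacle is genuinely available, since by the very definition of a realization in \cite{NP19} any two $\mbE$-triangles over the same $\delta$ with the same end terms are equivalent via a triple $(1_A,b,1_C)$ with $b$ an isomorphism, and $\xi$ is closed under isomorphisms by hypothesis. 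Two minor points: you should also record explicitly that $\xi_{\mbF}$ is closed under isomorphisms (immediate, since an isomorphism $(a,b,c)$ of $\mbE$-triangles gives $\delta'=a\cdot\delta\cdot c^{-1}\in\mbF$), and your derivation of part (2) from \cite[Theorem 3.2]{HZZ20} together with Proposition \ref{HLN21:prop 3.16} is exactly the route the paper itself sketches in the paragraph preceding the statement, so that step is consistent with how the result is used here.
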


The previous result shows that saturated proper classes and closed subfunctors are essentially the same. Thus, we may establish our main result in terms of proper classes as follows.

\begin{corollary} 
Let $(\mcC,\mbE,\mfs)$ be an extriangulated category. If $\xi$ is a proper class of $\mbE$-triangles, then following conditions are equivalent.
\begin{enumerate}
    \item  $\xi$ is saturated.
    \item  $\xi$ satisfy the \textbf{$3 \times 3$-lemma property}: for any commutative diagram of $\mbE$-triangles
    \begin{equation} \label{3x3 diagram pcls}
    \begin{tikzcd}
	A & B & C & {} \\
	D & E & G & {} \\
	H & I & J & {} \\
	{} & {} & {}
	\arrow["a", from=1-1, to=1-2]
	\arrow["i"', from=1-1, to=2-1]
	\arrow["b", from=1-2, to=1-3]
	\arrow["j", from=1-2, to=2-2]
	\arrow[dashed, from=1-3, to=1-4]
	\arrow["c", from=1-3, to=2-3]
	\arrow["d"', from=2-1, to=2-2]
	\arrow["k"', from=2-1, to=3-1]
	\arrow["e"', from=2-2, to=2-3]
	\arrow["l", from=2-2, to=3-2]
	\arrow[dashed, from=2-3, to=2-4]
	\arrow["f", from=2-3, to=3-3]
	\arrow["g"', from=3-1, to=3-2]
	\arrow[ dashed, from=3-1, to=4-1]
	\arrow["h"', from=3-2, to=3-3]
	\arrow[ dashed, from=3-2, to=4-2]
	\arrow[ dashed, from=3-3, to=3-4]
	\arrow[ dashed, from=3-3, to=4-3]
\end{tikzcd}     
\end{equation}
  such that $(i,j,c)$ or $(k,l,f)$ is a morphism of $\mbE$-triangles, if all columns and the outer rows belong to $\xi$, then also the middle row belongs to $\xi$.
\end{enumerate}
\end{corollary}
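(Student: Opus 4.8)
The plan is to transport the statement across the bijection of Proposition \ref{prop:csubf-propercls} and then invoke Theorem \ref{thm:3x3 equiv closed} together with the Remark following it. Set $\mbF := \mbE_{\xi}$; since $\xi$ is proper, $\mbF$ is an additive subfunctor of $\mbE$ by Proposition \ref{prop:csubf-propercls}(1). The first thing I would record is the dictionary between the two formulations: an $\mbE$-triangle $A \xrightarrow{x} B \xrightarrow{y} C \xdashrightarrow{\delta}$ is an $\mbF$-triangle if, and only if, it belongs to $\xi$. If it lies in $\xi$, then $\delta \in \mbE_{\xi}(C,A) = \mbF(C,A)$ by definition, so it is an $\mbF$-triangle; conversely, if $\delta \in \mbF(C,A)$, there is some $\mbE$-triangle $A \to B' \to C \xdashrightarrow{\delta}$ in $\xi$, and since any two $\mbE$-triangles realizing the same extension $\delta$ are isomorphic (by definition of the realization $\mfs$) and $\xi$ is closed under isomorphisms, the original triangle also belongs to $\xi$. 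Under this dictionary, the hypotheses and conclusion concerning membership in $\xi$ in diagram \eqref{3x3 diagram pcls} translate verbatim into the corresponding statements about $\mbF$-triangles in Definition \ref{dfn: 3x3 property}.

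With the dictionary in hand, the proof reduces to chaining known equivalences. By Proposition \ref{prop:csubf-propercls}(2), $\xi$ is saturated if, and only if, $\mbF$ is closed. By Theorem \ref{thm:3x3 equiv closed}, $\mbF$ is closed if, and only if, $\mbF$ has the $3\times 3$-lemma property of Definition \ref{dfn: 3x3 property}, i.e. the implication holds for every diagram in which $(i,j,c)$ is a morphism of $\mbE$-triangles. Reading this last condition through the dictionary gives precisely the $3\times 3$-lemma property for $\xi$ restricted to the case where $(i,j,c)$ is a morphism.

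The one subtlety to address is the disjunction in the corollary, which permits either $(i,j,c)$ or $(k,l,f)$ to be the given morphism of $\mbE$-triangles, whereas Definition \ref{dfn: 3x3 property} fixes $(i,j,c)$. I would handle this with the Remark following Theorem \ref{thm:3x3 equiv closed}: closedness of $\mbF$ is equally characterized by the $3\times 3$-lemma property formulated with $(k,l,f)$. Consequently, if $\mbF$ is closed, the required implication holds for every diagram in which at least one of $(i,j,c)$, $(k,l,f)$ is a morphism of $\mbE$-triangles, which is exactly the disjunctive version for $\xi$; conversely, that version specializes to the $(i,j,c)$-version of Definition \ref{dfn: 3x3 property}, which already forces $\mbF$ to be closed. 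Hence $\xi$ is saturated $\iff$ $\mbF$ is closed $\iff$ $\xi$ satisfies the $3\times 3$-lemma property, as desired. I expect the only genuine point of care to be the verification of the dictionary — that $\mbF$-triangles are exactly the $\mbE$-triangles in $\xi$ — which rests on the uniqueness up to isomorphism of realizations and the isomorphism-closedness of $\xi$.
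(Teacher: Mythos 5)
Your proof is correct and is precisely the argument the paper intends (the corollary is stated without proof as an immediate consequence of Proposition \ref{prop:csubf-propercls}, Theorem \ref{thm:3x3 equiv closed} and the remark on the $(k,l,f)$ variant). The dictionary step you single out — that $\mbF$-triangles for $\mbF=\mbE_{\xi}$ are exactly the $\mbE$-triangles in $\xi$, via uniqueness of realizations up to equivalence and isomorphism-closedness of $\xi$ — is indeed the only point requiring care, and you handle it correctly.
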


\begin{corollary} Let $(\mcC,\mbE,\mfs)$ be an extriangulated category and let $\xi$ be a saturated proper class of $\mbE$-triangles. Consider a commutative diagram of $\mbE$-triangles as in (\ref{3x3 diagram pcls}) such that all columns belong to $\xi$, and the triplets $(i,j,c)$ and $(k,l,f)$ are morphisms of $\mbE$-triangles. Then, the following conditions are equivalent.
\begin{enumerate}
    \item The middle row of (\ref{3x3 diagram pcls}) belongs to $\xi$.
    \item The first and third rows of (\ref{3x3 diagram pcls}) belong to $\xi$.
\end{enumerate}
\end{corollary}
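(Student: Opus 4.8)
The plan is to transport the statement into the language of closed subfunctors and then apply the Corollary stated immediately after Theorem~\ref{thm:3x3 equiv closed}. Since $\xi$ is a saturated proper class, Proposition~\ref{prop:csubf-propercls} shows that the associated subfunctor $\mbE_{\xi}$ is a closed subfunctor of $\mbE$. Under the bijective correspondence $\xi \mapsto \mbE_{\xi}$, an $\mbE$-triangle $A \to B \to C \xdashrightarrow{\delta}$ belongs to $\xi$ exactly when $\delta \in \mbE_{\xi}(C,A)$, i.e. exactly when it is an $\mbE_{\xi}$-triangle. The forward direction is the definition of $\mbE_{\xi}$; for the converse one uses that $\xi$ is closed under isomorphisms together with the fact that any two $\mbE$-triangles realizing the same extension are isomorphic through the identities on the end terms, so membership in $\xi$ depends only on the underlying extension. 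Setting $\mbF := \mbE_{\xi}$, the asserted equivalence becomes the statement that the middle row of~(\ref{3x3 diagram pcls}) is an $\mbF$-triangle if and only if the first and third rows are.

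Next I would verify that the hypotheses of that earlier Corollary hold for $\mbF = \mbE_{\xi}$. Write $\varepsilon_1, \varepsilon_2, \varepsilon_3$ for the extensions realized by the three rows of~(\ref{3x3 diagram pcls}). The assumption that all columns belong to $\xi$ says precisely that all columns are $\mbF$-triangles, as required. For the compatibility conditions, recall that a morphism of $\mbE$-triangles realizes an underlying morphism of $\mbE$-extensions; thus the hypothesis that $(i,j,c)$ is a morphism of $\mbE$-triangles from the first row to the second includes the relation $i\cdot \varepsilon_1 = \varepsilon_2 \cdot c$, so that $(i,c)\colon \varepsilon_1 \to \varepsilon_2$ is a morphism of $\mbE$-extensions. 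Likewise, $(k,l,f)$ being a morphism of $\mbE$-triangles from the second row to the third yields $k\cdot \varepsilon_2 = \varepsilon_3 \cdot f$, so that $(k,f)\colon \varepsilon_2 \to \varepsilon_3$ is a morphism of $\mbE$-extensions. These are exactly the two compatibility data demanded by the Corollary following Theorem~\ref{thm:3x3 equiv closed}.

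With these hypotheses in place, applying that Corollary to $\mbF = \mbE_{\xi}$ gives that the middle row of~(\ref{3x3 diagram pcls}) is an $\mbF$-triangle if and only if the first and third rows are $\mbF$-triangles. Translating back through the dictionary ``$\mbF$-triangle $=$ member of $\xi$'' established in the first paragraph converts this into the equivalence of conditions (1) and (2), completing the argument.

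I do not expect a genuine obstacle here, since the proof is essentially a dictionary translation resting on the already-established Theorem~\ref{thm:3x3 equiv closed} and its Corollary. The only two points that require care are, first, the equivalence ``an $\mbE$-triangle lies in $\xi$ iff its extension lies in $\mbE_{\xi}$'', which relies on closure of $\xi$ under isomorphism and the well-definedness of the realization $\mfs$, and second, the observation that the stronger \emph{triangle}-morphism hypotheses on $(i,j,c)$ and $(k,l,f)$ automatically supply the weaker \emph{extension}-morphism hypotheses that the earlier Corollary needs. Both are immediate from the definitions recalled in Section~\ref{sec:preliminaries}.
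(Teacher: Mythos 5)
Your proposal is correct and matches the paper's (implicit) argument: the paper states this corollary without proof precisely because it is the dictionary translation, via Proposition \ref{prop:csubf-propercls}, of the corollary following Theorem \ref{thm:3x3 equiv closed}, with the observation that morphisms of $\mbE$-triangles supply the required morphisms of $\mbE$-extensions. Nothing further is needed.
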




\section*{\textbf{Acknowledgments}} 

We thank Mindy Huerta and Octavio Mendoza for their encouragement, feedback and helpful improvements on the first version of this work.



\bibliographystyle{abbrv} 
\bibliography{biblio23} 





\end{document}